\newtheorem{lemma}{Lemma}
\newtheorem{theorem}{Theorem}
\newtheorem*{theorema}{Theorem A}
\newtheorem*{theoremb}{Theorem B}
\newcommand {\E} {\mathbb{E}}
\newcommand {\p} {\mathbb{P}}
\newcommand {\R} {\mathbb{R}}
\newcommand {\Q} {\mathbb{Q}}
\newcommand {\ve} {\varepsilon}
\def\blfootnote{\xdef\@thefnmark{}\@footnotetext}\makeatother
\title[On some questions of Arnold]{\bf On some questions of V.I. Arnold on the stochasticity of geometric and arithmetic progressions}
\author{Christoph Aistleitner} 
\address{Department of Mathematics and Statistics, Graduate School of Science, Kobe University, 1-1 Rokkodai, Nada-ku, Kobe 657-8501, Japan }
\email{aistleitner@math.tugraz.at}
\thanks{The author is supported by a Schr\"odinger scholarship of the Austrian Research
Foundation (FWF).}
\subjclass[2010]{11K45; 60C05; 37A50; 37A45}
\begin{document}

\begin{abstract}
In some of his final papers, V.I. Arnold studied pseudorandomness properties of finite deterministic sequences, which he measured in terms of their ``stochasticity parameter''. In the present paper we illustrate the background in probability theory and number theory of some of his considerations, and give answers to some of the questions raised in his papers.
\end{abstract}

\date{}
\maketitle

\nocite{a1}\nocite{a2}\nocite{a3}\nocite{a4}\nocite{a5}\nocite{a6}\nocite{a7}\nocite{a8}\nocite{a9}\nocite{a10}\nocite{a11}

\section{Introduction} \label{sec1}

In some of his final papers, V.I. Arnold investigated pseudorandomness properties of finite deterministic sequences of integers or reals. Amongst several other types of sequences, he in particular investigated arithmetic progressions, geometric progressions, continued fraction expansions, permutations and quadratic residues; see the papers~\cite{a1}--\cite{a11} in the bibliography below. To quantify the degree of pseudorandomness of these sequences, Arnold used a ``stochasticity parameter'' $\lambda_n$, and several of the mentioned papers of Arnold begin with a short history of the introduction of this stochasticity parameter in Kolmogorov's seminal ``Italian paper''~\cite{kolm}\footnote{An english translation of Kolmogorov's Italian paper, together with an introduction by M.A. Stephens, can be found in~\cite{break}.
}. In this paper, which was published in the same issue of an Italian actuarial journal as the papers of Glivenko~\cite{glivenko} and Cantelli~\cite{cant} on what we know today as the \emph{Glivenko--Cantelli theorem}, Kolmogorov proved that the normalized maximal deviation between the empirical distribution $C_n(X)$ of a set of $n$ independent random variables and the underlying distribution itself has an universal limiting distribution; this fact can be used to test whether a given sample is likely to be a realization of an independent, identically distributed (i.i.d.) random sequence drawn from a certain distribution. Kolmogorov's method also had a political dimension in the poisonous atmosphere of the time of its development; he himself used it to defend Mendelian genetics against the state-supported Lamarckism of Lysenko in an article~\cite{kolm2} in 1940, but had to publicly retract the article eight years later.\footnote{The situation of mathematical life under Stalin's rule is described in detail in G.G. Lorentz' paper on ``Mathematics and politics in the Soviet Union from 1928 to 1953''~\cite{lorentz}, and in the book ``Golden years of Moscow mathematics''~\cite{gold}, which contains a chapter on Kolmogorov, written by Arnold.}\\

Let $x_1, \dots, x_n$ be real numbers, sorted in increasing order. Their \emph{empirical counting function} $C_n(X)$ is defined as the number of elements $x_m$ which are not larger than $X$; that is, we have
\begin{equation} \label{cn}
C_n(X) = \left\{ \begin{array}{ll} 0 & \textrm{for $X < x_1$,}\\ m & \textrm{for $x_m \leq X < x_{m+1}$,}\\n & \textrm{for $x_n \leq X$.}\end{array} \right.
\end{equation}
In contrast, the \emph{theoretical counting function} $C_0(X)$ is given by 
$$
C_0(X) = n \p (x \leq X),
$$
that is by the expected number of values not exceeding $X$ of independent observations of the random variable $x$ (in other words, this is $n$ times the cumulative distribution function of $x$). Let 
$$
\mathcal{F}_n = \sup_X \left| C_n(X) - C_0(X)\right|.
$$
Then the \emph{stochasticity parameter}$\lambda_n$ is defined by
$$
\lambda_n = \frac{\mathcal{F}_n}{\sqrt{n}}.
$$
Kolmogorov proved, under the assumption that the cumulative distribution function of $x$ is continuous, that $\lambda_n$ has a limiting distribution $\Phi$ as $n \to \infty$, which is given by
\begin{equation} \label{kolm}
\Phi(X) = \lim_{n \to \infty} \p (\lambda_n \leq X) = \sum_{k=-\infty}^{\infty} (-1)^k e^{-2k^2 X^2}, \qquad \textrm{for $X > 0$.}
\end{equation}
Note that the distribution $\Phi$, which is now known as \emph{Kolmogorov distribution}, is \emph{universal} -- it does not depend on the initial distribution of $x$ (although it is assumed, as noted, that the initial distribution is continuous). Thus a given (large) sample of observations may be accepted as a realization of a sequence of i.i.d.~random variables having a specific distribution if and only if the value of its stochasticity parameter $\lambda_n$, calculated with respect to this distribution, is contained in an interval which contains the largest part of the mass of the Kolmogorov distribution. This principle if the basis of the \emph{Kolmogorov--Smirnov test} in statistics. A possible choice for such an interval may be $[0.4,1.8]$, since the Kolmogorov distribution assigns a probability of less than one per cent to the range outside of this interval.\\

Arnold used the stochasticity parameter to investigate the degree of randomness of finite deterministic sequences. In~\cite{a8}, the following examples are given: the sequence
$$
03, \quad 09, \quad 27, \quad 81, \quad 43, \quad 29, \quad 87, \quad 61, \quad 83, \quad 49, \quad 47, \quad 41, \quad 23, \quad, 69, \quad 07
$$
which is constructed as a geometric progression modulo 100, and the sequence
$$
37, \quad 74, \quad 11, \quad 48, \quad 85, \quad 22, \quad 59, \quad 96, \quad 33, \quad 70, \quad 07, \quad 44, \quad 81, \quad 18, \quad 55,
$$
which is an arithmetic progression modulo 100. The number of elements is in both cases $n=15$. For the first sequence Arnold obtained the stochasticity parameter $\lambda_{15} \approx 0.70$, while for the second he got $\lambda_{15} \approx 0.33$. This should be compared to the corresponding values for the Kolmogorov distribution, which are $\Phi(0.70)\approx 0.30$ and $\Phi(0.33) < 0.001$. This indicates that the first sequence is rather likely to be ``random'' than the second sequence.\\

Arnold acknowledges that this argument has methodological flaws. On the on hand, the sample size $n=15$ may be too small to assume that the stochasticity parameter of a random sequence already follows Kolmogorov's distribution. On the other hand, Kolmogorov's result is explicitly only applicable in the case of continuous distribution functions, while a distribution assigning positive probabilities only to the numbers $0,\dots,99$ (or any other set of integers) is of course discrete (for this issue, see Section~\ref{sect2} below). However, in Arnold's words, despite ``being more a method of natural sciences than a mathematical theorem'', such empirical observations still ``can provide useful information about the nature of the variable we are considering''. Based on a large number of empirical observations, Arnold for example observed that geometric progressions usually are much more ``random'' than arithmetic progressions, as in the case of the two sequences mentioned above.\\

In the papers~\cite{a1}--\cite{a11}, Arnold collected a large number of empirical observations, rigorous mathematical theorems and open problems concerning the ``randomness'' of deterministic sequences. The purpose of the present paper is to comment on some of the observations, illustrate the context of these investigations in probability theory and number theory, and to answer some particular problems.\\

It should be noted that Kolmogorov's stochasticity parameter is just one out of many possible ways to measure the randomness of a given sequence. Later in his life, Kolmogorov himself established a \emph{complexity theory}, which can be used to formalize randomness (see~\cite{lv}). More recently, an effort to measure the pseudorandomness properties of finite sequences was made by Mauduit and S{\'a}rk{\"o}zy, who introduced and studied several new measures of pseudorandomness (first for binary sequences, starting with~\cite{ms}, and later for sequences of more symbols~\cite{ahl,ahl2}). The problem is also discussed in detail in volume 2 of Knuth's ``The art of computer programming''~\cite{kn}.\\

Concerning Arnold's investigations, I think one should distinguish between several instances of the problem. Firstly, between the cases
\begin{enumerate}[1.]
\item The sample size $n$ being fixed, and
\item The sample size $n$ tending to infinity,
\end{enumerate}
and secondly between the cases
\begin{enumerate}[a)]
\item The underlying distribution being discrete, and
\item The underlying distribution being continuous.
\end{enumerate}

To see that the first distinction is necessary, we note that by the Chung--Smirnov law of the iterated logarithm (established by Chung~\cite{chung} and Smirnov~\cite{smirnov}; see also~\cite[p.~505]{sw}) for a sequence of independent, identically distributed random variables having a continuous distribution we have
\begin{equation} \label{chungs}
\limsup_{n \to \infty} \frac{ \lambda_n}{\sqrt{\log \log n}} = \frac{1}{\sqrt{2}} \qquad \textup{almost surely}.
\end{equation}
In other words, even if the typical value of $\lambda_n$ for fixed $n$ should be somewhere between $0.4$ and $1.8$, in the long run for an infinite sample of observations we should expect values of $\lambda_n$ as large as roughly $\sqrt{\log \log n}$ to occur from time to time (there even exist precise quantitative results how often such large values should be observed; see~\cite{strass}). Concerning the second distinction, one has to recall that Kolmogorov's limit theorem is only valid for continuous distributions; thus it is not a priori clear against which distribution the obtained value of $\lambda_n$ should be tested in the discrete case. This issue will be addressed in Section~\ref{sect3} below.\\

The outline of the remaining part of this paper is as follows. In the subsequent section, we will introduce the notion of the \emph{star-discrepancy}, which is a classical concept in analytic number theory. We will show that in the case of $\lambda_n$ being calculated with respect to the continuous uniform distribution on $[0,1]$, the star-discrepancy and Kolmogorov's stochasticity parameter coincide, and that consequently known results from discrepancy theory can be utilized to answer Arnold's questions. Section~\ref{sect2} explains the context of the Kolmogorov stochasticity parameter in empirical process theory, and shows what happens in the case when the underlying  distribution is discrete. In Section~\ref{sect3} we will discuss a conjecture of Arnold on the \emph{typical} value of the stochasticity parameter for sequences of the form $a^x A$ mod $N$, where $A$ is fixed and $x=1,2,\dots$. Here the word ``typical'' means that we want to obtain results which hold for almost all parameters $a$, in the sense of Lebesgue measure. In Section~\ref{sect4} we discuss the closely related problem asking for the typical value of the the stochasticity parameter of $a^x A$ mod $N$ where now $a>1$ is fixed, $x=1,2,\dots$, and $A$ is taken uniformly from $[0,N]$. In Section~\ref{sect5} we discuss the problem of arithmetic progressions with real (not necessarily rational) step size, which is closely connected with the theory of continued fractions. Finally, Section~\ref{sect6} contains the proof of a theorem stated in Section~\ref{sect4}.

\section{Uniform distribution modulo 1 and discrepancy theory} \label{secud}

Let $x_1,x_2,\dots$ be an infinite sequence of real numbers. This sequence is called \emph{uniformly distributed modulo one} (u.d.\ mod 1) if for all $X \in [0,1]$ the asymptotic relation
\begin{equation} \label{eq}
\lim_{n \to \infty} \frac{C_n(X)}{n} = X
\end{equation}
holds. Here $C_n$ is the empirical counting function of the sequence of fractional parts of $x_1,x_2,\dots$ (therefrom the name ``uniform distribution \emph{modulo one}''). In other words, an infinite sequence is u.d.\ mod 1 if every interval $[0,X]$ contains asymptotically the ``fair'' share of fractional parts of elements of the sequence, proportional to its length $X$. In a vague sense a sequence which is u.d.\ mod 1 can be interpreted as showing ``random'' behavior, since by the Glivenko-Cantelli theorem a sequence of i.i.d.~uniformly $[0,1]$-distributed random variables satisfies~\eqref{eq} almost surely.\\

The notion of uniform distribution modulo one originates (independently) in work of Bohl, Hardy--Littlewood, Sierpi\'nski and Weyl in the early years of the 20th century. The most important paper in the early theory of uniform distribution modulo one is certainly Weyl's~\cite{weyl} seminal paper of 1916. It contains, amongst many other important results, the \emph{Weyl criterion}, which states that a sequence $x_1,x_2,\dots$ is u.d.\ mod 1 if and only if for all $h \in \mathbb{Z} \backslash \{0\}$
$$
\lim_{n \to \infty} \frac{1}{n} \sum_{m=1}^n e^{2 \pi i h x_m} = 0,
$$
thereby linking the theory of uniform distribution with the theory of exponential sums and Fourier analysis. A detailed survey on the early years of uniform distribution theory is given in~\cite{hb} (in German).\\

The degree of uniformity of the distribution of a finite point set $x_1,\dots,x_n$ can be measured in terms of its \emph{star-discrepancy}, a notion which was established by Van der Corput in the 1930s. The star-discrepancy $D_n^*$ of points $x_1,\dots,x_n$ from the unit interval is defined as
$$
D_n^*(x_1,\dots,x_n) = \sup_{X \in [0,1]} \left| \frac{C_n(X)}{n} - X\right|,
$$
where again $C_n$ is the empirical counting function of the fractional parts of $x_1,x_2,\dots$. It is easily seen that an infinite sequence is u.d.\ mod 1 if and only if its star-discrepancy tends to zero as $n \to \infty$. Discrepancy theory is a heavily investigated subject, amongst other reasons because it has important applications in numerical mathematics. By Koksma's inequality the deviation between the integral of a function $f$ over the unit interval and the arithmetic mean of the function values $f(x_1),\dots,f(x_n)$ is bounded by the product of the variation of the function and the star-discrepancy of $x_1,\dots,x_n$. A similar inequality holds in the higher-dimensional setting, indicating that point sets having small discrepancy can be used for numerically approximating the integral of a function. This observation is the cornerstone of the \emph{Quasi-Monte Carlo method} (QMC method) for numerical integration. Since there exist points sets having a discrepancy of order almost $N^{-1}$, the convergence rate of QMC integration can be much faster than the asymptotic error rate of order $N^{-1/2}$ of so-called Monte Carlo integration, where random sampling points are used. The interested reader can find more information on uniform distribution theory and discrepancy theory in the monographs of Drmota--Tichy~\cite{dts} and Kuipers--Niederreiter~\cite{knu}. A comparison between Monte Carlo and Quasi-Monte Carlo methods can be found in the book of Lemieux~\cite{lem}.\\

It is easy to see that there is a close connection between the star-discrepancy $D_n^*$ and the stochasticity parameter $\lambda_n$ in the case when the sequence is contained in $[0,1]$ and the underlying distribution in the stochasticity parameter is assumed to be the uniform distribution on $[0,1]$. More precisely, in this case these two quantities coincide up to normalization, and we have
\begin{equation} \label{*}
\sqrt{n} D_n^*(x_1,\dots,x_n) = \lambda_n.
\end{equation}
The notion of the star-discrepancy can be clearly generalized to sequences on an arbitrary finite interval $[A,B]$ instead of $[0,1]$ (counting the points contained in the periodic continuation of a subinterval of $[A,B]$, and comparing to the normalized Lebesgue measure). Thus in the case of real sequences from a finite interval, which are compared to the uniform distribution on this interval, results from discrepancy theory can be directly translated into results for the stochasticity parameter. We will use this fact in Sections~\ref{sect3}-\ref{sect6} below. It should be noted that while the uniform distribution may be the ``natural'' choice to use for comparison with the empirical distribution of a deterministic set of real numbers, there also exist many classes of sequences of reals whose limit distribution is different from the uniform distribution; many examples can be found in the book of Strauch and Porubsk{\'y}~\cite{sp}.\\

The classical theory of the star-discrepancy does not apply to the case of sequences which only have a finite number of possible values, and whose empirical distribution is compared with a discrete distribution. In particular, the results mentioned in Sections~\ref{sect3}-\ref{sect6} below cannot explain Arnold's observations in this setting, such as the apparent difference in the degree of randomness between the residues of arithmetic and geometric progressions of integers.

\section{Applying the Kolmogorov distribution to discrete random variables} \label{sect2}

In Kolmogorov's theorem, the assumption that the i.i.d.~random variables under consideration have a continuous distribution is crucial. Consider, for example, the case of $n$ independent fair Bernoulli random variables $z_1,\dots,z_n$ (that is, each of them is either 0 or 1 with probability 1/2, respectively). It is easily seen that in this case the empirical counting function is given by
\begin{equation*}
C_n(X) = \left\{ \begin{array}{ll} 0 & \textrm{for $X < 0$,}\\ n - \sum_{m=1}^n z_m & \textrm{for $0 \leq X < 1$,}\\1 & \textrm{for $1 \leq X$.}\end{array} \right.
\end{equation*}
Consequently, we have
$$
\lambda_n = \frac{\left|\sum_{m=1}^n z_m - n/2\right|}{\sqrt{n}}.
$$
Thus, in this simple setting by the central limit theorem the limit distribution of $\lambda_n$ is the distribution of $|z|$, where $z$ is a normal random variable with expectation 0 and variance 1/4 (this distribution is called a \emph{half-normal distribution}). In particular, in this setting the limit distribution of $\lambda_n$ is \emph{not} the Kolmogorov distribution. Note that the half-normal distribution has properties which are totally different from those of the Kolmogorov distribution; for example, its mass is not separated from the origin, and thus (in contrast to the Kolmogorov distribution) it is not unlikely to observe values close to 0.\\

In this context, Arnold writes~\cite[p.~35]{a8}:
\begin{quote}
Kolmogorov proved his theorem for real random variables (with continuous distribution functions). Earlier I (unlawfully) applied Kolmogorov's universal distribution $\Phi$ to variables assuming only integer values or even just a finite number of values (remainders of division by an integer $N$).\\
I know, of course, that mathematical rigor does not allow us to make unsubstantiated generalizations like that. But as a natural scientist I believe that, e.g., results in astronomy should not depend on whether the distance measured in some units [\dots] takes real or just integer values.\\
Therefore I hope that one can apply the Kolmogorov theory not just to real random variables (with continuous distribution functions) but also to other variables; for example, this theory should have generalizations to random variables whose values are integers $x \in \mathbb{Z}$, or points on the circle $S^1$, or remainders $x \in \mathbb{Z}_N = \mathbb{Z} / N \mathbb{Z}$, or even rational numbers ($x \in \mathbb{Q}$).\\
Of course, all these generalized theories should be rigorously formulated and proofs should be given, I hope this will be done (by mathematicians of the future). However, even before that I will be using Kolmogorov's distribution~$\Phi$ in these more general cases (in the hope that it is a sufficiently good approximation to genuine distributions of the randomness parameter in these generalized theories).
\end{quote}

Actually, these tasks are not left to future mathematicians. The theory of empirical processes has been intensively investigated for several decades, and there exist results which are even much more general than those alluded to by Arnold. Let $x_1,x_2,\dots$ be i.i.d.~random variables. Using the definitions from above, we set
\begin{equation} \label{gdef}
G_n(X) = \frac{C_n(X) - C_0(X)}{\sqrt{n}}.
\end{equation}
This stochastic process is called the \emph{empirical process}, and we clearly have
$$
\lambda_n = \sup_X |G_n(X)|.
$$
Whenever we assume that $X$ is \emph{fixed} and let $n \to \infty$, then by the central limit theorem $G_n(X)$ converges to a normal random variable with mean 0 and variance $C_0(X) \left(1 - C_0(X)\right)$. However, much more is true. The sequence of random processes $(G_n)_{n=1,2,\dots}$ converges, in an appropriate sense, to a Gaussian limit process $G$. The convergence here is so-called \emph{weak convergence in the Skorokhod space}. Let $B$ denote the \emph{(standard) Brownian bridge} on $[0,1]$, which is, informally speaking, a (standard) Brownian motion under the additional condition that $B(1)=0$. If $W$ is a (standard) Brownian motion, which is also called a (standard) Wiener process, then a (standard) Brownian bridge on $[0,1]$ is given by
\begin{equation} \label{repres}
B(t) = W(t) - tW(1), \qquad t \in [0,1].
\end{equation}
Using this terminology, the limit process $G$ in the aforementioned limit theorem may be written as
$$
G(X) = B(C_0(X)), \qquad X \in \mathbb{R}.
$$
This limit theorem is called \emph{Donsker's theorem}. It can be found, together with definitions of weak convergence and of the Skorokhod space, and together with a detailed account on empirical processes, in the monographs of Shorack--Wellner~\cite{sw} or van der Vaart--Wellner~\cite{vdv}.\\

For the sake of shortness, I do not want to give a detailed definition of weak convergence. In a simplified view, we may understand that weak convergence means the convergence in distribution of all ``simple'' functionals of $G_n$ to the corresponding functionals of $G$. In the case of the stochasticity parameter $\lambda_n$ this functional is the $L^\infty$-norm, and by Donsker's theorem the distribution of $\lambda_n$ converges to the distribution of
\begin{equation} \label{limitd}
\sup_{X \in \mathbb{R}} \left|B(C_0(X))\right|.
\end{equation}
However, in the case when the function $C_0$ is continuous the distributions of
$$
\sup_{X \in \mathbb{R}} \left|B(C_0(X))\right| \qquad \textrm{and} \qquad \sup_{X \in [0,1]} \left|B(X)\right|
$$
are the same. Thus in this case the limit distribution of $\lambda_n$ does not depend on $C_0$, and is just the distribution of the supremum of the (standard) Brownian bridge -- which is Kolmogorov's distribution.\\

On the other hand, if $x_1,x_2,\dots$ are discrete i.i.d.~random variables having only a finite number of possible values, then clearly the distribution of~\eqref{limitd} (which is the limit distribution of $\lambda_n$) is the distribution of the maximum of the Brownian bridge at a finite number of positions. In particular, if $x_1,x_2,\dots$ have $N$ possible values and each has probability $1/N$, then the limit distribution of $\lambda_n$ is the same as the distribution of
\begin{equation} \label{distdist}
\max_{m=1,\dots,N-1} \left|B\left(\frac{m}{N}\right)\right|.
\end{equation}
Arnold's investigations are based on the conviction that the Kolmogorov distribution is a good approximation for the distribution of~\eqref{distdist}, provided $N$ is ``large''. It is not difficult to see that this actually is the case. For example, based on the representation~\eqref{repres} and on the fact that the distribution of the maximum of a Brownian motion is well-known (due to the so-called \emph{reflection principle}), one could quite easily find explicit upper bounds for the deviation between the distribution of~\eqref{distdist} and Kolmogorov's distribution.\\

The speed of convergence in Kolmogorov's limit theorem (and Donsker's theorem) has also been investigated; a fundamental results in this context is the Koml\'os--Major--Tusn\'ady theorem~\cite{kmt,kmt2}. Furthermore, the problem concerning the convergence of the empirical process has been generalized to far more general settings than that of random variables on $\mathbb{R}$ and test sets of the form $(-\infty, X]$, as in Kolmogorov's theorem and Donsker's theorem, and now covers empirical processes indexed by (general) sets as well as empirical processes indexed by functions. Roughly speaking, the convergence properties in this case depend on the \emph{entropy} of the class of test sets (or test functions, respectively). In particular, the instances mentioned by Arnold (such as points on a circle) are covered by this general theory. For more information on this topic, and for all the technical details, the reader is once again referred to~\cite{sw,vdv}.

\section{The stochasticity parameter of geometric progressions} \label{sect3}

In~\cite[p.~35]{a8}, Arnold mentions the following example:
\begin{quote}
{\bf Example.} Modulo $N$ remainders of $n$ terms 
$$
\{a^x A \quad (\textup{mod}~ N)\}, \qquad (n=0,1,\dots,(n-1)\}
$$
of the geometric progression with the first term $A$ and integer ratio $1 < a < N$ can look like a random sample of points uniformly distributed over $\mathbb{Z}_N$ provided that the number $n$ of terms is not ``too large''. For example, we can take $n \approx T/2$ or $\theta T$ with the constant $\theta$ separated from 0 and 1, $0 < \theta < 1$, where $T=T(N,a)$ is the period of the sequence [in the displayed formula above] consisting of remainders of terms of progression modulo $N$.\\
For different initial points $A$ (of the dynamical system $\mathbb{Z}_N \mapsto \mathbb{Z}_N$ sending $x$ to $ax$) the Kolmogorov stochasticity parameter $\lambda_n$ of the corresponding $n$ remainders of terms of the geometric progression takes different values $\lambda_n(A)$.\\
Computing all these numbers and counting their distribution (corresponding to the uniform distribution of the initial point $A$ in $\mathbb{Z}_N$) I have found (based on several hundreds of such experiments) a reasonable similarity of the distribution of obtained values $\lambda_n(A)$ of the Kolmogorov stochasticity parameter for different orbits of our dynamical system (i.e., for different values of $A$) with the universal distribution $\Phi$ of the stochasticity parameter~[\dots] \\
The similarity with the Kolmogorov distribution $\Phi$ is not a theorem but an empirical observation. In mathematical terms, it should be called a ``conjecture'' that as $N \to \infty$, the distribution of the values of the Kolmogorov parameter for $\mod N$ remainders of terms of $N$ geometric progressions (corresponding to $N$ initial points $A$) tends to $\Phi$.\\
The ``theoretical distribution'' $C_0$ of $\mod N$ remainders in these experiments was assumed to be uniform~[\dots] As far as I know, this conjecture about the uniform distribution of remainders is not yet rigorously proved$^1$, but in the book [3] (about Galois theory) I gave some ``physical proofs'' of (more general) theorems about uniform distribution (including the uniform distribution of fractional parts of numbers $a^x$ for almost all real bases $a$)~[\dots] These ``physical arguments'' are not proofs in the mathematical sense~[\dots]
\end{quote}

There are two misprints in the quoted text; on the right-hand side of the displayed formula at the beginning, $n$ should be $x$, and later in the text $x \mapsto ax$ evidently should be $x \mapsto a^x$. The footnote mentioned in the text is the following:
\begin{quote}
$^1$I am grateful to A.A. Karatsuba who brought to my attention the article by J.F. Koksma, \emph{Ein mengentheoretischer Satz \"uber die Gleichverteilung modulo Eins}, Compositio Math. {\bf 2} (1935), 250-258.
\end{quote}
Koksma's paper is cited as~\cite{koks} in the bibliography of the present paper. The main result in Koksma's paper is the fact that the sequence of fractional parts of $a^x,~x=1,2,\dots,$ is uniformly distributed modulo 1 (in the sense of uniform distribution theory, as introduced in Section~\ref{secud}) for almost all $a>1$. The book referred to in the quoted text is Arnold's book \emph{Dynamics, statistics and projective geometry of {G}alois fields}. He refers to the Russian version~\cite{a13} of 2005; in the meantime, an English translation~\cite{a14} has also been published.\\

In the discrete setting, Arnold's conjecture is probably extremely difficult. Some results in this direction have been proved; see for example~\cite{chang,shp}.\\

The situation in the continuous case, that is in the case of real values for the parameters $a$ and $A$, the situation is quite different from the discrete case, for a number of reasons. On the one hand, in the discrete case it does not make sense to keep $A$ and $a$ fixed and let $n \to \infty$, since the sequence $a^x A ~\textup{mod}~ N$ for $x =1,2,\dots$, is periodic (which implies that in this case for all $A$ and $a$ we have $\lambda_n \to 0$ as $n \to \infty$). This is different when $A$ and $a$ are real numbers, and it makes perfect sense in this case to ask for the behavior of $\lambda_n$ as $n \to \infty$. On the other hand, while in the case of real $A$ and $a$ the problem asking for the distribution of $a^x A ~\textup{mod}~ N$ is typically extremely complicated for \emph{fixed} values of $A$ and $a$, it is possible to obtain sharp results for \emph{typical} values of $A$ and $a$ (where ``typical'' should be understood in the sense of Lebesgue measure: the exceptional set has measure zero).\\

The fact that in the continuous case sharp results for ``typical'' sequences may be obtained is noted in the quotation from Arnold's paper, and is reflected in the reference to Koksma's paper. However, I could not find out what exactly Arnold refers to. He writes: ``in the book~[3] (about Galois theory) I gave some ``physical proofs'' of (more general) theorems about uniform distribution (including the uniform distribution of fractional parts of numbers $a^x$ for almost all real bases $a$)'', with a reference to the book listed as item~\cite{a13} in the bibliography of the present paper. However, actually no such results are contained in this book (I can only read the English translation~\cite{a14}, but it seems quite clear for me that it contains exactly the same material as the Russian original).\\

Of course it would be desirable to solve the problem of the distribution of $a^x A \quad (\textup{mod}~ N)$ for \emph{specific} values of $a$ and $A$, rather than only for almost all parameters. However, this is a notoriously difficult problem, and very little is known. For example, it is unknown whether the fractional parts of the sequences $(e^x)_{x\geq 1}$, $(\pi^x)_{x \geq 1}$, or $((3/2)^x)_{x \geq 1}$ are uniformly distributed modulo one or not. Actually, the situation is much worse. For example, we do not even know whether 
$$
\textup{limpsup}_{x \to \infty} \left\{ (3/2)^x \right\} - \textup{liminf}_{x \to \infty} \left\{ (3/2)^x \right\} > \frac{1}{2}
$$
(this is Vijayaraghavan's~\cite{viya} problem of 1940; in this statement and in the sequel $\{ \cdot \}$ denotes the fractional part of a real number). We also don't know whether or not there exists an $A \neq 0$ such that 
$$
\left\{A (3/2)^x \right\}  \in \left[0,\frac{1}{2} \right) \qquad \textrm{for all $x=1,2,\dots$} 
$$
(this is Mahler's~\cite{mahl} problem of 1968). Results for this kind of problem are very scarce; see for example~\cite{aki,dub} for recent contributions. Another confirmation of how meager our knowledge on these topics is, is the fact that although by Koksma's result for almost all $a$ the sequence of fractional parts of $a^x, ~x=1,2,\dots$ is uniformly distributed modulo one, we do not know even a single specific number $a$ which has this property.\\

In~\cite[p.~36]{a8}, Arnold formulates the following conjecture:
\begin{quote}
{\bf Conjecture}. The Kolmogorov stochasticity parameter $\lambda_n$ of residues modulo $N$ of $n$ terms of a geometric progression with an arbitrary ratio $a>1$ does not tend to zero as $n \to \infty$ (for almost all $a$, so that exceptional values form a set of Lebesgue measure zero on the real line).  
\end{quote}

The solution to this conjecture is known; the answer is affirmative. Arnold's conjecture, asserting that the Kolmogorov parameter $\lambda_n$ of a ``typical'' geometric progression is not too small, should be compared to the case of arithmetic progressions, where the Kolmogorov parameter of a typical sequence actually is too small (it tends to 0 as $n \to \infty$); see Section~\ref{sect5} below. However, in comparison with~\eqref{chungs} the assertion that $\lambda_n$ does not tend to 0 as $n \to \infty$ is too weak to capture the behavior of the Kolmogorov stochasticity parameter for a typical i.i.d.~random sequence. Under the supposition that a typical geometric progression behaves similar to a typical realization of an i.i.d.~random sequence, one could actually conjecture that even $\sqrt{\log \log n} \lambda_n$ does not tend to 0 as $n \to \infty$ for almost all $a>1$. As the following results from~\cite{a1} shows this stronger statement is also true, and the Kolmogorov stochasticity parameter for typical geometric progressions satisfies the Chung--Smirnov law of the iterated logarithm in exactly the same way as an i.i.d.~random sequence.

\begin{theorema}
Let $A>0$ and $N>0$ be fixed real numbers. Then for the sequence of remainders $a A, a^2 A, a^3 A, \dots$ modulo $N$ we have
$$
\limsup_{n \to \infty} \frac{\lambda_n}{\sqrt{\log \log n}} = \frac{1}{\sqrt{2}} \qquad \textrm{for almost all $a \in \R,~a > 1$},
$$
where the Kolmogorov stochasticity parameter $\lambda_n$ is calculated with respect to the uniform distribution on $[0,N]$.
\end{theorema}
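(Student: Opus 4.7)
The plan is to reduce Theorem~A to an almost sure invariance principle (ASIP) for the empirical process of $(a^k A/N)_{k \geq 1}$ modulo one, and then to invoke the Chung--Smirnov LIL~\eqref{chungs} for the Brownian bridge. By the identification $\lambda_n = \sqrt n\, D_n^*$ of~\eqref{*} (after rescaling $[0,N]$ to $[0,1]$), the claim is equivalent to
$$
\limsup_{n \to \infty} \frac{\sqrt n\, D_n^*(a A/N, \dots, a^n A/N)}{\sqrt{\log \log n}} = \frac{1}{\sqrt 2} \qquad \text{for a.e. } a > 1.
$$
Since a property valid for a.e.~$a$ in each compact sub-interval $[b,c] \subset (1,\infty)$ is automatically valid for a.e.~$a > 1$, I would fix $1 < b < c < \infty$ and work with $a$ viewed as uniformly distributed on $[b,c]$.

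The central step is to construct a sequence of standard Brownian bridges $\widetilde B_n$ on a common probability space with $a$ such that the empirical process
$$
G_n(X) = \frac{1}{\sqrt n} \sum_{k=1}^n \bigl(\mathbf 1_{[0,X]}(\{a^k A/N\}) - X\bigr), \qquad X \in [0,1],
$$
satisfies $\sup_X |G_n(X) - \widetilde B_n(X)| = o(\sqrt{\log\log n})$ almost surely. Once this ASIP is established, the Chung--Smirnov LIL applied to $\sup_X |\widetilde B_n(X)|$ supplies the upper bound $\limsup \leq 1/\sqrt 2$ and also the matching lower bound, yielding Theorem~A.

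To build the ASIP, expand the indicator into a Fourier series $\mathbf 1_{[0,X]}(t) - X = \sum_{h \neq 0} c_h(X)\, e^{2\pi i h t}$ with $|c_h(X)| \leq 1/(\pi|h|)$; this reduces matters to moment bounds in $a$ on the exponential sums
$$
S_n(h) = \sum_{k=1}^n e^{2\pi i h a^k A/N}.
$$
Repeated integration by parts in $a$ gives $\int_b^c e^{2\pi i h a^k A/N}\, da \ll (|h|\, k\, b^{k-1})^{-1}$, and the lacunarity $a^{k+1}/a^k = a \geq b > 1$ forces the off-diagonal terms in the expansion of $|S_n(h)|^{2p}$ to cancel, producing uniform moment bounds of the form $\int_b^c |S_n(h)|^{2p}\, da \ll_p n^p$. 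A blocking argument along exponentially growing index blocks, a Skorokhod-type martingale embedding, and a chaining argument over an $n^{-1}$-net of test values $X$ then convert these moment bounds into the desired strong approximation by a Brownian bridge.

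The main obstacle is attaining the \emph{sharp} constant $1/\sqrt 2$, as opposed to the easier qualitative bound $\sqrt n\, D_n^* = O(\sqrt{\log\log n})$ almost surely. The latter follows relatively directly from the moment estimates by Borel--Cantelli, but matching the Chung--Smirnov constant forces the limiting covariance of $G_n$ to agree exactly with that of the standard Brownian bridge, $\min(X,Y) - XY$. Equivalently, one has to verify that for a.e.~$a$ the off-diagonal contributions $\sum_{j \neq k} \bigl(\mathbf 1_{[0,X]}(\{a^j A/N\}) - X\bigr)\bigl(\mathbf 1_{[0,Y]}(\{a^k A/N\}) - Y\bigr)$ are $o(n)$, so that the variance of $\sqrt n\, G_n(X)$ is asymptotic to $n X(1-X)$. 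This asymptotic independence, in which the lacunary structure of $(a^k)$ does its essential work, is the technical heart of the proof.
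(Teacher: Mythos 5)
You should first be aware that the paper does not actually prove Theorem A: it imports the mod-$1$ statement from the literature (the citation attached to the theorem points to one of Arnold's papers on continued fractions and is evidently a slip; the intended source is the metric discrepancy literature around \cite{ab} and \cite{phil}) and adds only the remark that the mod-$N$ case follows by rescaling $A$ together with the identification \eqref{*}. Your opening reductions --- passing to $N=1$ and to compact parameter windows $[b,c]\subset(1,\infty)$ --- therefore reproduce the only portion of the argument the paper itself carries out, and the substance of your proposal has to stand on its own against the external proof.

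As a program your sketch points at the correct circle of ideas (Koksma/Erd\H{o}s--G\'al type moment estimates exploiting that the derivative in $a$ of $h_1a^{k_1}+\dots+h_ra^{k_r}$ is dominated by the largest exponent, a martingale approximation, a discretization in $X$), but the step on which everything hangs does not go through as stated. The Chung--Smirnov law \eqref{chungs} is a theorem about the empirical process of an i.i.d.\ sequence, equivalently about the sections $K(\cdot,n)/\sqrt{n}$ of a Kiefer process; it says nothing about an arbitrary sequence of standard Brownian bridges $\widetilde B_n$ on a common probability space. If the $\widetilde B_n$ were independent, the Kolmogorov tail $\p(\sup|B|>\lambda)\sim 2e^{-2\lambda^2}$ and Borel--Cantelli give $\limsup_n \sup_X|\widetilde B_n(X)|/\sqrt{\log\log n}=+\infty$; if they all coincided, the $\limsup$ would be $0$. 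So what you actually need is an approximation of the two-parameter process $\sum_{k\le n}\bigl(\mathds{1}_{[0,X]}(\{a^kA\})-X\bigr)$ by a Kiefer process, uniformly in $X$, with error $o(\sqrt{n\log\log n})$ --- a statement strictly stronger than Theorem A and not delivered by the tools you list. In particular, moment bounds $\int_b^c|S_n(h)|^{2p}\,da\ll_p n^p$ with uncontrolled dependence on $p$ yield, via Chebyshev and Borel--Cantelli, only estimates of the shape $O(n^{1/2+\ve})$ and hence at best a bounded LIL with an unspecified constant; the exact constant $1/\sqrt{2}$ requires subgaussian large-deviation inequalities for $\sum_{k\le n}f(a^kA)$ whose variance parameter is asymptotically exactly $\int_0^1f^2$ for the centered indicators (so $X(1-X)$, maximized at $X=1/2$, which is also where the lower bound must come from), and the non--absolutely convergent Fourier series of $\mathds{1}_{[0,X]}-X$ must be truncated, with the tail controlled through bounded variation, before the reduction to the sums $S_n(h)$ is legitimate. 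These are precisely the points where the proof in the literature does its real work; in your write-up each is asserted in a single clause rather than established.
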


This theorem is stated in~\cite{a1} only for the case of the fractional part of a sequence, that is for the case of $a^1 A, a^2 A, \dots$ being reduced modulo 1. However, it is easily seen that by a simple change of scale the theorem also covers the case of $a^1 A, a^2 A, \dots$ being reduced modulo $N$, by means of replacing $A$ by $NA$. Thus, the answer to Arnold's conjecture is affirmative.

\section{The stochasticity parameter of lacunary sequences} \label{sect4}

In the previous section we discussed the problem whether or not a geometric progression is uniformly distributed modulo 1 or not. A quite similar problem to that of deciding for which $a$ the fractional parts of $a^x A,~x = 1,2,\dots$ (for fixed $A$) are uniformly distributed modulo one is that of deciding for which $A$ the sequence of fractional parts of $a^x A,~x=1,2,\dots$ (for fixed $a$) is u.d.\ mod 1 -- that is, in the case of integral $a$, the problem of deciding whether $A$ is a so-called \emph{normal number} in base $a$ or not. The property of being a normal number in a certain base is usually defined in terms of the number of occurrences of digits and blocks of digits in the digital expansion of the number; for example, a number $A$ is normal in base 10 if in its decimal expansion (after the decimal point) each possible digit 0,1,\dots,9 occurs with asymptotic frequency 1/10, each block of 2 digits such as 00, 01, etc. appears with asymptotic frequency 1/100, each block of 3 digits appears with asymptotic frequency 1/1000, and so on. It is not difficult to see that this property can be described in terms of the uniform distribution modulo 1 of $10^x A$; to see that is the case, one just has to note that the map $A \mapsto 10 A$ modulo 1 represents a shift to the left of the decimal digits of $A$, and that consequently counting the number of occurrences of certain digits is the same as summing the values of indicator functions of appropriate intervals at positions $A,\{10 A\},\{10^2 A\}$, etc. For example, the number of occurrences of the digit ``4'' among the first $n$ decimal digits (after the decimal point) of a number $A$ is equal to
$$
\sum_{m=0}^{n-1} \mathds{1}_{[0.4,0.5)}(\{10^m A\}),
$$
and in the same way we can count the number of occurrences of blocks of digits. By a famous result of Borel~\cite{borel}, almost all numbers are normal (in every given integer base). Constructing normal numbers is possible, but rather difficult. However, deciding whether a number such as for example $\pi$, $e$, $\sqrt{2}$ is normal in a given base or not is an extremely difficult problem, and is entirely open. For example, it is often conjectured that all algebraic irrationals are normal (in every integer base), but we are very, very far from proving such a result (see~\cite{bail} for the state of research).\\

Borel's result is the first appearance of what we now call the \emph{strong law of large numbers}, in the special case of the so-called Rademacher functions (which form, as later observed by Steinhaus, a sequence of i.i.d.~random variables). Formulated in base 10, Borel's result states that the sequence of fractional parts of $10^x,~x=1,2,\dots$, is u.d.\ mod 1; this is a special case of the by now well-established principle that so-called lacunary sequences of functions exhibit properties which are typical for sequences of independent random variables. Here ``lacunary sequence of functions'' means a sequence of the form $f(a_1 y),f(a_2 y),f(a_3 y),\dots$, where $f$ is a function which is periodic with period 1 and $a_1,a_2,\dots$ is a quickly increasing sequence of integers, satisfying for example the \emph{Hadamard gap condition} $a_{x+1}/a_x \geq q > 1,~x=1,2,\dots$ (in our case, the role of the 1-periodic function is played by the fractional part function $f(y)=\{y\}$). Questions concerning the behavior of such function systems for almost all $y$ can be handled in the same way as questions concerning the almost sure behavior of systems of i.i.d.~random variables - this is the reason why many probabilistic results for lacunary sequences are known; see~\cite{kac} for a classical and~\cite{ab} for a recent survey. \\

In~\cite[p.~36]{a8}, following the conjecture mentioned in the previous section, Arnold formulates the following conjecture:
\begin{quote}
Moreover, one can conjecture that for almost any base $a>1$ the following more general statement holds: The distribution of the values $\lambda_n(A)$ of the Kolmogorov stochasticity parameter $\lambda_n$ of the sequences of $n$ remainders modulo $N$ of geometric progressions starting at different points $A$ ($0 < A < N)$, tend[s], as $n \to \infty$, to the universal Kolmogorov distribution~$\Phi$ (under the assumption that the starting point $A$ is uniformly distributed on the interval $0 < A < N$). 
\end{quote}

Note that this conjecture is much stronger than the conjecture from the previous section, where it was only required that $\lambda_n$ does not tend to 0 as $n \to \infty$. However, there is also a difference between the probabilistic model which is used to specify a class of parametric sequences. In the previous section, the sequence $a^x A,~x =1,2,\dots$ was obtained by assuming $A$ to be fixed and allowing different values for the parameter $a$. In the present case, $a$ is fixed and $A$ is variable. Thus to solve the problem from the previous section (and in the case of reduction modulo 1) it was, roughly speaking, necessary to show that the functions $\{a A\},\{a^2 A\},\{a^3 A\},\dots$, understood as functions of $a$, show a behavior which is similar to that of sequences of i.i.d.~random variables. In the present case it has to be shown that the same functions, now understood as functions of $A$, also behave like i.i.d.~random variables. These two problems are technically quite different, and require different methods. Generally speaking, the case of lacunary sequences (that is, of assuming that $A$ is the variable and $a$ is fixed, as in the present section) is the case which has a longer research history, is better understood, and is easier to handle.\\

The asymptotic behavior of the Kolmogorov stochasticity parameter (or, in other words: the star-discrepancy) of lacunary sequences is an intensively studied subject. It turns out that precise results depend on fine number-theoretic properties of the growth factor $a>1$ in a very sensitive way. Quite recently, Fukuyama~\cite{fuk1} proved the following. 

\begin{theoremb}
The Kolmogorov stochasticity parameter $\lambda_n$ of the sequence $aA,a^2 A,a^3 A,\dots$ modulo 1 satisfies, for almost all $A \in [0,1]$, the asymptotic relation
\begin{eqnarray*}
\limsup_{n \to \infty} \frac{\lambda_n}{\sqrt{\log \log n}} = \left\{ \begin{array}{ll} \frac{\sqrt{84}}{9} & \textrm{for $a=2$,}\\ \frac{\sqrt{(a+1)a(a-2)}}{\sqrt{2 (a-1)^3}} & \textrm{if $a \geq 4$ is an even integer,} \\
\frac{\sqrt{a+1}}{\sqrt{2 (a-1)}} & \textrm{if $a \geq 3$ is an odd integer,} \\ \frac{1}{\sqrt{2}} & \textrm{if $a>1$ and $a^x \not\in \Q$ for $x=1,2,\dots$.} \end{array} \right.
\end{eqnarray*}
\end{theoremb}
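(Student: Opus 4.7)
The plan is to exploit the identity $\sqrt{n}\, D_n^* = \lambda_n$ from \eqref{*}, which reduces Theorem~B to a law of the iterated logarithm for the star-discrepancy of the lacunary orbit $(\{a^k A\})_{k\geq 1}$ regarded as a function of $A$. For each $t\in[0,1]$ set $f_t(x) := \mathds{1}_{[0,t)}(\{x\}) - t$; then $\sqrt{n}\, D_n^* = \sup_t n^{-1/2}|S_n(t)|$ with $S_n(t) := \sum_{k=1}^n f_t(a^k A)$. My strategy has four steps: (i) compute the asymptotic variance $\sigma^2(t)$ of $n^{-1/2} S_n(t)$; (ii) establish the pointwise LIL by means of the classical theory of lacunary trigonometric series; (iii) upgrade to a uniform-in-$t$ Chung--Smirnov-type LIL; (iv) evaluate $\sup_t \sigma(t)$ in closed form in each of the four listed cases.

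For step (i) I expand $f_t$ in Fourier series and study
$$\sigma^2(t) = \lim_{N\to\infty} \frac{1}{N}\int_0^1 S_N(t)^2\, dA.$$
The cross-terms $\int_0^1 e^{2\pi i(k_1 a^i - k_2 a^j)A}\, dA$ vanish unless $k_1 a^i = k_2 a^j$. When $a^x \notin \Q$ for every integer $x\geq 1$, no such coincidence can occur for $i\neq j$, so $\sigma^2(t)$ collapses to $\int_0^1 f_t^2 = t(1-t)$. When $a$ is an integer, the resonances $k_1 = k_2 a^{j-i}$ contribute, and
$$\sigma^2(t) = t(1-t) + 2\sum_{\ell=1}^\infty \int_0^1 f_t(x)\, f_t(a^\ell x)\, dx.$$
The inner integrals admit explicit piecewise-polynomial formulas in $t$, and summing the resulting geometric series in $a^{-\ell}$ produces a closed form whose precise shape depends on the parity of $a$, with $a=2$ a separate case because of how the sign pattern of $f_{1/2}(a^\ell x)$ behaves.

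Step (ii), the pointwise LIL $\limsup_n |S_n(t)|/\sqrt{2n\log\log n} = \sigma(t)$ almost surely, follows from the Erd\H{o}s--G\'al--Takahashi theory in its modern Philipp--Stout form after truncating $f_t$ to a trigonometric polynomial and controlling the Fourier tail. Step (iii), the uniform upgrade
$$\limsup_{n\to\infty} \frac{\sqrt{n}\, D_n^*}{\sqrt{2\log\log n}} = \sup_{t\in[0,1]} \sigma(t) \qquad \textrm{a.s.},$$
is obtained by coupling $t\mapsto n^{-1/2} S_n(t)$ with a centred Gaussian process of covariance $\Gamma(s,t)$ (with $\Gamma(t,t)=\sigma^2(t)$) via a strong invariance principle for empirical processes of lacunary sequences, and then invoking Strassen's LIL for the sup-norm of that limit process. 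Step (iv) is a calculus exercise: for odd integer $a$ the supremum is attained at $t=1/2$, giving $\sup_t \sigma(t)^2 = (a+1)/(4(a-1))$ and hence the LIL constant $\sqrt{(a+1)/(2(a-1))}$; the analogous maximisations in the remaining three cases produce $\sqrt{84}/9$, $\sqrt{(a+1)a(a-2)/(2(a-1)^3)}$, and $1/\sqrt{2}$.

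The main obstacle is step (iii). A pointwise LIL for a single test function $f_t$ is classical, but upgrading it to a uniform statement over $t$ demands either a Gaussian strong approximation of the empirical process of the lacunary sequence (developed by Berkes, Philipp, and Fukuyama in the work underlying Theorem~B) or a substantial chaining argument based on maximal inequalities on $[0,1]$. Step (i) also hides a subtlety: one must verify that the contribution of compound resonances $k_1 a^i = k_2 a^j$ with large $|i-j|$ decays geometrically, so that the infinite series defining $\sigma^2(t)$ converges and admits the claimed closed form. Once these two technical points are in place, the remaining calculations are routine if intricate.
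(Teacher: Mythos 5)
First, a point of reference: the paper does not prove Theorem~B at all --- it is quoted verbatim from Fukuyama's paper \emph{The law of the iterated logarithm for discrepancies of $\{\theta^n x\}$} (cited as~\cite{fuk1}), so there is no internal proof to compare your outline against. Judged on its own terms, your four-step strategy (resonance analysis of the variance, pointwise LIL for lacunary sums, uniform upgrade, explicit maximisation of $\sigma(t)$) is the correct architecture and is essentially the route the literature takes. Your computation of $\sigma^2(1/2)=(a+1)/(4(a-1))$ for odd integer $a$ is right and does reproduce the stated constant. But two of your steps conceal the actual mathematical content. In step~(i), for the case $a^x\notin\Q$ you assert that the cross terms $\int_0^1 e^{2\pi i(k_1a^i-k_2a^j)A}\,dA$ ``vanish unless $k_1a^i=k_2a^j$''. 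That is only true when the exponent is an integer; for irrational $a$ these integrals are of size $|k_1a^i-k_2a^j|^{-1}$, and the quantity $|k_1-k_2a^{j-i}|$ is \emph{not} uniformly bounded below as $k_1,k_2$ range over $\Z$, so the off-resonance contributions do not trivially disappear. Controlling them is precisely the content of Fukuyama's Riesz--Raikov analysis (the result quoted as Lemma~1 in Section~\ref{sect6} of this paper), and without it the claim $\sigma^2(t)=t(1-t)$ is unproved.

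The second gap is step~(iv). You assert that for odd $a$ the supremum of $\sigma(t)$ is attained at $t=1/2$ and that ``analogous maximisations'' yield the other three constants; but proving $\sigma^2(t)\le\sigma^2(1/2)$ for all $t$ is a genuine inequality about a complicated piecewise function, and for even $a$ the maximiser is emphatically \emph{not} $t=1/2$: at $t=1/2$ the function $f_{1/2}$ has only odd Fourier frequencies, the resonance $k_1=k_2a^{\ell}$ forces $k_1$ even, so every correlation term vanishes and $\sigma^2(1/2)=1/4$, which gives only the i.i.d.\ constant $1/\sqrt{2}$ rather than $\sqrt{84}/9$ or $\sqrt{(a+1)a(a-2)}/\sqrt{2(a-1)^3}$. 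Locating and evaluating the true maximiser for even $a$, and explaining why $a=2$ falls outside the $a\ge 4$ formula, is where the real work of the integer case lies, and your outline does not engage with it. Step~(iii) you correctly flag as the main obstacle, but naming a strong invariance principle is not the same as supplying one; the upper half of the uniform LIL (that $\limsup_n\sup_t$ does not exceed $\sup_t\sigma(t)$) requires a chaining argument with exponential inequalities of the type in~\cite{phil}, adapted to real $a$. So: right skeleton, but the three places you defer to ``routine if intricate'' calculations are exactly the places where the theorem is hard.
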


The last case is particularly interesting; it covers the case when $a$ is a transcendental number. Since almost all numbers are transcendental, this is the typical case with respect to Lebesgue measure, and as in Section~\ref{sect3} there is a perfect accordance with the Chung--Smirnov LIL~\eqref{chungs} for i.i.d.~random variables.\\

A corresponding limit theorem for the distribution of $\lambda_n$ has not been proved so far; we state it below as a theorem.

\begin{theorem} \label{th1}
Let $a>1$ be a fixed real number for which $a^x \not\in \Q$ for $x=1,2,\dots$, and let $N>0$ also be fixed. Then for the Kolmogorov stochasticity parameter $\lambda_n$ of the sequence $a A, a^2 A, a^3 A,\dots$ mod $N$ we have
$$
 \lim_{n \to \infty} \p (A \in [0,N]:~\lambda_n \leq X) = \Phi(X) \qquad \textrm{for all $X \in \R$,}
$$
where $\p$ denotes the normalized Lebesgue measure on $[0,N]$, where $\Phi$ is the distribution function of the Kolmogorov distribution as defined in~\eqref{kolm}, and where $\lambda_n$ is calculated with respect to the uniform distribution on $[0,N]$.
\end{theorem}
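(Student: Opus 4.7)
The strategy is to prove weak convergence of the empirical process $G_n$ to the standard Brownian bridge $B$ on $[0,1]$, and then to deduce the theorem from the continuous mapping theorem applied to the supremum functional, together with the identity $\Phi(t) = \p(\sup_{s \in [0,1]} |B(s)| \leq t)$ highlighted in Section~\ref{sect2}. By the change of variables $A \mapsto NA$, the problem reduces to the case $N=1$: then $A$ is uniform on $[0,1]$ and, by~\eqref{*}, one has $\lambda_n = \sup_{s \in [0,1]} |G_n(s;A)|$ with
$$G_n(s;A) = \frac{1}{\sqrt{n}} \sum_{k=1}^n \bigl( \mathds{1}_{[0,s]}(\{a^k A\}) - s \bigr).$$
Thus the theorem is a Donsker-type invariance principle for the empirical process of the lacunary sequence $(\{a^k A\})_{k \geq 1}$.

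For the finite-dimensional distributions, fix $s_1,\ldots,s_m \in [0,1]$ and Fourier-expand each indicator to obtain
$$G_n(s_j;A) = \sum_{h \neq 0} \frac{1 - e^{-2\pi i h s_j}}{2\pi i h} \cdot \frac{1}{\sqrt{n}} \sum_{k=1}^n e^{2\pi i h a^k A}.$$
Under the hypothesis $a^x \notin \Q$ for every $x \geq 1$, no non-trivial resonance $h a^k + h' a^j = 0$ with $k \neq j$ can occur, since such a relation would force $a^{|k-j|} = \mp h'/h \in \Q$. The classical central limit theorem for lacunary trigonometric series (Kac, Gaposhkin, Takahashi; compare with Theorem~B and the discussion leading to it in Section~\ref{sect4}) then yields joint asymptotic normality of $(G_n(s_1),\ldots,G_n(s_m))$ with precisely the Brownian bridge covariance
$$\lim_{n \to \infty} \E\bigl[ G_n(s_i) G_n(s_j) \bigr] = \min(s_i,s_j) - s_i s_j.$$

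The remaining step is tightness in the Skorokhod space $D[0,1]$, for which it suffices to establish a moment-increment estimate
$$\E\bigl[ (G_n(t) - G_n(s))^4 \bigr] \leq C (t-s)^{1+\delta}, \qquad 0 \leq s \leq t \leq 1,$$
uniform in $n$, for some fixed $\delta > 0$. The required fourth-moment bound for partial sums of $\mathds{1}_{[s,t]}(\{a^k A\}) - (t-s)$ can be deduced by again inserting Fourier expansions and exploiting the absence of resonances, thereby reducing the analysis to mixed-moment sums of lacunary exponentials of the form $\E[e^{2\pi i (h_1 a^{k_1} + h_2 a^{k_2} + h_3 a^{k_3} + h_4 a^{k_4})A}]$, which vanish or are small except on a diagonal that is easy to estimate. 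I expect the main obstacle to lie precisely in this uniform (in $s,t$) fourth-moment control: the Fourier coefficients of $\mathds{1}_{[s,t]}$ decay only like $|h|^{-1}$, so the trade-off between truncating at a high frequency (to handle the $L^\infty$-norm in $s,t$) and keeping the error in the non-resonant terms under control is delicate, and must be carried out with the same care as in the sharp discrepancy estimates behind Theorem~B. Once this is done, the standard combination of tightness and finite-dimensional convergence yields $G_n \Rightarrow B$ in $D[0,1]$, and the continuous mapping theorem applied to $\sup_{s} |\cdot|$ completes the proof.
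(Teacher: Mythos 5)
Your overall architecture --- reduction to $N=1$, convergence of finite-dimensional distributions plus tightness in $D[0,1]$, then the continuous mapping theorem applied to the supremum functional --- is exactly the paper's. The two technical pillars differ, however, and one of them has a genuine flaw. For the finite-dimensional distributions the paper does not Fourier-expand the indicators: it applies the Cram\'er--Wold device to reduce the problem to a single sum $\frac{1}{\sqrt{n}}\sum_{x=1}^n f(a^x A)$ with $f=\sum_k b_k\bigl(\mathds{1}_{[0,t_k]}(\{\cdot\})-t_k\bigr)$ of bounded variation, and then invokes Fukuyama's central limit theorem for Riesz--Raikov sums \cite{fuk2}, which for real $a$ with $a^x\notin\Q$ gives a Gaussian limit with variance exactly $\int_0^1 f^2$. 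Your route essentially asks you to reprove that theorem: for non-integer $a$ the cross terms $\int_0^1 e^{2\pi i(ha^k-h'a^j)A}\,dA$ do not vanish (the exponent is not an integer), they are merely small, and combining a trigonometric CLT with the only-$O(|h|^{-1})$ decay of the indicator's Fourier coefficients is precisely the hard content of \cite{fuk2}. The classical lacunary CLTs you cite cover individual trigonometric sums, not sums of bounded-variation functions of a real geometric progression, so this step is not a citation but a substantial piece of analysis you have left open.

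The clearer gap is in tightness. The single-increment bound $\E\bigl[(G_n(t)-G_n(s))^4\bigr]\le C(t-s)^{1+\delta}$ uniformly in $n$ is false, even in the benchmark case of $n$ i.i.d.\ uniform samples: when $0<t-s$ is much smaller than $n^{-1/\delta}$, the event that exactly one of the $n$ points falls in $(s,t]$ has probability of order $n(t-s)$ and produces an increment of size about $n^{-1/2}$, so the fourth moment is at least of order $(t-s)/n$, which exceeds $C(t-s)^{1+\delta}$ for $t-s$ small enough. This is the classical reason why Donsker-type theorems for empirical processes are not proved via a Kolmogorov--Chentsov moment criterion. You must either use the product-increment condition of \cite[Theorem 13.5]{bill}, i.e.\ a bound on $\E\bigl[|G_n(t)-G_n(s)|^2\,|G_n(s)-G_n(r)|^2\bigr]$, which suppresses the single-point contribution, or do what the paper does: verify the two conditions of \cite[Theorem 13.2]{bill} by means of the Bernstein-type exponential inequalities and the dyadic chaining method of Philipp \cite{phil}, extended from integer to real lacunary bases as in \cite{berk}. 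With the tightness argument replaced accordingly and the finite-dimensional step delegated to \cite{fuk2}, your outline becomes the paper's proof.
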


Note that, as in Fukuyama's theorem above, the set of real numbers $a$ for which  $a^x \not\in \Q$ for $x=1,2,\dots$ has full Lebesgue measure. Thus Theorem~\ref{th1} proves Arnold's conjecture. The proof of Theorem~\ref{th1} will be given in Section~\ref{sect6}, at the end of this paper. If the assumption $a^x \not\in \Q$ for $x=1,2,\dots$ in the statement of Theorem~\ref{th1} is replaced by $a^x \in \Q$ for some positive integer $x$, then there still exists a limit distribution of the Kolmogorov stochasticity parameter $\lambda_n$. However, in this case the limit distribution depends on number-theoretic properties of $a$ and $x$ in a very complicated way, and is different from Kolmogorov's distribution.

\section{The stochasticity parameter of arithmetic progressions} \label{sect5}

In~\cite{a5}, Arnold proves two theorems on the stochasticity parameter of arithmetic progressions:\footnote{as noted in~\cite{a5}, by suitably choosing the scale the general case of arithmetic progressions modulo $N$ can be reduced to the case of arithmetic progressions modulo 1, that is to the case of fractional parts of arithmetic progressions.}
\begin{quote}
\begin{itemize}
\item For arithmetic progressions of fractional parts whose step $k$ is a rational number the Kolmogorov stochasticity parameter $\lambda_n$ tends to $0$ as $n \to \infty$ (indicating an asymptotic loss of randomness for such a long progression).
\item Contrary to the case of rational $k$, [there exist examples] in which the Kolmogorov parameter $\lambda_n$ does not tend to 0 as $n \to \infty$. It can even attain, though infrequently, arbitrarily large values (which cannot, however, exceed $\sqrt{n}$) for some sufficiently large lengths $n$ of the progressions. 
\end{itemize}
\end{quote}
The first result is proved using a relatively simple counting argument. The second result is proved constructively by giving an example of a value of $k$, specified in terms of its continued fraction expansion, which has the desired property. At the end of~\cite{a5}, Arnold writes:
\begin{quote}
I do not know whether the value of the Kolmogorov stochasticity parameter $\lambda_n$ of an arithmetic progression of fractional parts of the $n$ numbers $kx$ tends
to zero for almost all real numbers k, or whether it is just as often unbounded (it might also be ``generically'' bounded away from 0 and $\infty$). The ergodicity of the
Gauss--Kuzmin dynamical system $z \mapsto \{1/z\}$ suggests that any such asymptotic behavior of the stochasticity parameter $\lambda_n$ should have probability either 0 or 1 (in the space of values of the parameter $k$) (provided that it depends only on the asymptotic behavior of the partial quotients $a_s$ of the continued fraction of $k$ as
$s \to \infty$). But I do not know whether the probability is 0 or 1 for the types of behavior described above for the stochasticity parameter.
\end{quote}

In the later paper~\cite{a8} Arnold writes in this context:
\begin{quote}
Unfortunately, I don't know which alternative (``almost always'' or ``almost never'') holds for the properties formulated above: whether remainders of almost all arithmetic
progressions are random or nonrandom as far as the behavior of the values $\lambda_n$ of the stochasticity parameter of the first n elements of the sequence is concerned.
This general question is difficult to check both theoretically and experimentally: an empirical study of the fractional parts of arithmetic progressions presumably requires answering nontrivial questions about the statistics of continuous fractions, and the standard ``Gauss--Kuzmin'' statistics describing the distribution of incomplete continuous fractions of random real numbers (and their finite combinations) is insufficient to solve the above nontrivial problems.
\end{quote}

It must be noted that there is a significant difference (which is somewhat concealed in~\cite{a5}) between the two results cited above. Remember that the Kolmogorov stochasticity parameter $\lambda_n$ depends on the theoretical counting function $C_0(X)$ to which the empirical counting function $C_n(X)$ is compared. In the first of the two results from above, if the rational step size is $k=p/q$ for coprime $p,q$, then the theoretical counting function $C_0(X)$ is chosen as $n$ times the distribution function of the discrete uniform distribution on $\{0,1/q,\dots,(q-1)/q\}$. Of course this makes perfect sense, since the possible values of $\{kx\},~x=1,2,\dots$ are exactly the numbers $\{0,1/q,\dots,(q-1)/q\}$. On the other hand, in the second case (the case of irrational $k$) the theoretical counting function $C_0(X)$ is chosen as $n$ times the continuous uniform distribution on $[0,1]$. This also makes sense: by the equidistribution theorem of Bohl, Sierpi\'nski and Weyl the sequence $\{kx\},~x=1,2,\dots$ is uniformly distributed modulo 1 (in the sense of Section~\ref{secud}) for all irrational $k$, and thus in particular for almost all $k$ in the sense of Lebesgue measure. Consequently, the only reasonable choice for the theoretical counting function in the case of typical real $k$ is the continuous uniform distribution on $[0,1]$. Note that in this case, as mentioned in Section~\ref{secud}, the notion of the Kolmogorov stochasticity parameter coincides (up to normalization) with the star-discrepancy.\\

Arnold's observation that the problem of the stochasticity parameter (or, in the language of Section~\ref{secud}: the star-discrepancy) of a sequence of fractional parts $\{k x\},~x=1,2,\dots$, is intimately connected with the continued fraction expansion of the step $k$ is absolutely right. This observation was made independently by several mathematicians around 1920, such as Hecke, Ostrowski, Hardy--Littlewood, and Behnke. Roughly speaking, the smaller the continued fraction coefficients of $k$ are, the smaller the discrepancy of $\{k x\},~x=1,2,\dots$, is. There also exist many precise quantitative results giving discrepancy bounds in terms of the continued fraction coefficients of $k$; such results are presented in great detail in~\cite[Chapter 2, Section 3]{knu} and~\cite[Section 1.4.1]{dts}. Together with the profound results of Khintchine~\cite{khin,khin2} on the metric theory of continuous fractions one obtains the following result (\cite[Theorem 1.72]{dts}):\\

Suppose that $\psi(n)$ is a positive increasing function. Then
$$
n D_n^*(\{k\}, \dots, \{kn\}) = \mathcal{O} \left((\log n)\psi(\log \log n)\right) \qquad \textrm{as $n \to \infty$}
$$
for almost all $k \in \R$ if and only if
$$
\sum_{n=1}^\infty \frac{1}{\psi(n)} < \infty.
$$
In particular, this implies that for arbitrary $\ve>0$ we have
$$
D_n^*(\{k\}, \dots, \{kn\}) = \mathcal{O} \left(\frac{(\log n)(\log \log n)^{1+\ve}}{n} \right) \qquad \textrm{as $n \to \infty$}
$$
for almost all $k \in \R$. Consequently, by~\eqref{*}, we also have
$$
\lambda_n \to 0 \qquad \textrm{for almost all $k$,}
$$
which provides the solution of Arnold's problem.

\section{Proof of Theorem~\ref{th1}} \label{sect6}

It is easy to see that the value of the Kolmogorov stochasticity parameter $\lambda_n$ for testing the distribution of $aA,a^2A,a^3A,\dots,a^N A$ mod $N$ against the uniform distribution on $[0,N]$ is the same as the value of $\lambda_n$ when testing $a A/N,a^2A/N,a^3A/N,\dots,a^N A/N$ mod $1$ against the uniform distribution on $[0,1]$. Thus, for the proof of Theorem~\ref{th1} we may assume without loss of generality that $N=1$, which means that $A$ is taken uniformly from $[0,1]$ and the sequence we consider is the sequence of fractional parts $\{aA\},\{a^2A\},\{a^3 A\}, \dots$.\\

Our proof of Theorem~\ref{th1} follows the one given in~\cite{ab2} for the case of quickly increasing integer sequences $a_x,~x=1,2,\dots$, and which we adopt to the sequence $a^x,~1,2,\dots$ for real $a$ instead. All the necessary definitions and basic concepts (\emph{c\`{a}dl\`{a}g-function, Skorokhod space, Brownian bridge, tightness, weak convergence, \dots}) can be found for example in~\cite{bill}. The key ingredient is the following result of Fukuyama~\cite{fuk2}. It is stated in~\cite{fuk2} in a much more general multi-dimensional form, but we only need a special case of the one-dimensional version.

\begin{lemma}[{\cite[Theorem 1]{fuk2}}] \label{lemma1}
Let $f(y)$ be a measurable function which is of bounded variation on $[0,1]$ and satisfies
$$
f(y+1)=f(y), \qquad \int_0^1 f(y)~dy = 0, \qquad \int_0^1 f^2(y)~dy = 1.
$$
Then for all $X \in \R$ we have
$$
\lim_{n \to \infty} \p \left( A \in [0,1]: \frac{1}{\sqrt{n}} \sum_{x=1}^n f(a^x A) \leq X \right) = \frac{1}{\sqrt{2 \pi}} \int_{-\infty}^X e^{-y^2/2} ~dy,
$$
where $\p$ denotes the Lebesgue measure on $[0,1]$.
\end{lemma}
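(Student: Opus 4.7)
Following the reduction at the start of Section~\ref{sect6}, I may assume $N=1$ and $A$ uniform on $[0,1]$, so that $\lambda_n = \sup_{X \in [0,1]} |G_n(X)|$ where
$$
G_n(X) = \frac{1}{\sqrt{n}} \sum_{x=1}^n \bigl( \mathds{1}_{[0,X]}(\{a^x A\}) - X \bigr), \qquad X \in [0,1].
$$
The plan is to prove that $G_n$ converges weakly in the Skorokhod space $D[0,1]$ to a standard Brownian bridge $B$, and then to invoke the continuous mapping theorem applied to the sup-norm functional $\phi(h) = \sup_{X \in [0,1]} |h(X)|$. Since $B$ has almost surely continuous paths, $\phi$ is continuous at $B$ in the Skorokhod topology, so $\lambda_n = \phi(G_n) \Rightarrow \phi(B)$, and by definition~\eqref{kolm} the distribution of $\sup_{X \in [0,1]} |B(X)|$ is exactly $\Phi$. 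By the standard criterion, weak convergence in $D[0,1]$ follows from (i) convergence of finite-dimensional distributions and (ii) tightness.

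\textbf{Finite-dimensional distributions.} For fixed points $0 \leq X_1 < \dots < X_k \leq 1$ and real scalars $c_1,\dots,c_k$, define the 1-periodic mean-zero function of bounded variation
$$
f(y) = \sum_{j=1}^k c_j \bigl( \mathds{1}_{[0,X_j]}(\{y\}) - X_j \bigr),
$$
whose $L^2$-norm squared is
$$
\sigma^2 := \int_0^1 f(y)^2 \, dy = \sum_{i,j=1}^k c_i c_j \bigl( \min(X_i, X_j) - X_i X_j \bigr).
$$
If $\sigma > 0$, applying Lemma~\ref{lemma1} to $f/\sigma$ yields $\sum_j c_j G_n(X_j) = \frac{1}{\sqrt{n}} \sum_{x=1}^n f(a^x A) \Rightarrow \mathcal{N}(0, \sigma^2)$; the case $\sigma = 0$ is trivial. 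Since this covariance structure is precisely that of the Brownian bridge, the Cram\'er--Wold device gives convergence of all finite-dimensional marginals of $G_n$ to those of $B$.

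\textbf{Tightness.} This is the main obstacle, and it is where the assumption $a^x \notin \Q$ for every $x \geq 1$ enters in a substantial way. I would follow the strategy of~\cite{ab2}, adapting the integer-$a$ argument to real $a$. By a classical criterion, it suffices to prove that for every $\varepsilon > 0$,
$$
\lim_{\delta \to 0}\, \limsup_{n \to \infty} \p\!\left( \sup_{\substack{X, Y \in [0,1] \\ |X-Y| \leq \delta}} |G_n(X) - G_n(Y)| > \varepsilon \right) = 0.
$$
To establish this, I would expand the centred indicator $\mathds{1}_{[Y,X]}(\{t\}) - (X-Y)$ in its Fourier series and, for dyadic discretizations of $[0,1]$, control the $L^2(dA)$ norm of the resulting partial sums. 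The diagonal terms give a contribution of order $n(X-Y)$, matching the Brownian bridge variance. The off-diagonal cross terms reduce to integrals $\int_0^1 e^{2\pi i (h_1 a^{x_1} + h_2 a^{x_2}) A} \, dA = O(|h_1 a^{x_1} + h_2 a^{x_2}|^{-1})$; the hypothesis $a^x \notin \Q$ guarantees that the denominator is nonzero for every nontrivial $(h_1, x_1, h_2, x_2)$, and the rapid growth of $a^{\max(x_1,x_2)}$ provides enough decay to sum these terms. Combining the resulting moment bound with a chaining argument over dyadic grids yields a modulus-of-continuity estimate of the form $\E \sup_{|X-Y| \leq \delta} |G_n(X) - G_n(Y)|^2 \leq C \delta^{1+\eta}$ for some $\eta > 0$, which implies tightness. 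The quantitative control of the cross integrals (absent in the integer-$a$ case treated in~\cite{ab2}, where they simply vanish by orthogonality) is the delicate point and the step I expect to require the most care.

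\textbf{Conclusion.} Finite-dimensional convergence together with tightness give $G_n \Rightarrow B$ in $D[0,1]$, and the continuous mapping theorem then yields $\lambda_n \Rightarrow \sup_{X \in [0,1]} |B(X)|$, whose distribution function is $\Phi$ by~\eqref{kolm}. This proves Theorem~\ref{th1}.
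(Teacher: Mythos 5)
You have proved the wrong statement. The statement at issue is Lemma~\ref{lemma1}, Fukuyama's central limit theorem for the sums $n^{-1/2}\sum_{x=1}^{n} f(a^{x}A)$ with $f$ an arbitrary $1$-periodic, mean-zero, normalized function of bounded variation; what you have written is a proof of Theorem~\ref{th1}. Worse, as a proof of the lemma your argument is circular: the convergence of the finite-dimensional distributions of $G_n$, which is the heart of your write-up, is obtained by \emph{applying Lemma~\ref{lemma1}} to the functions $\sum_j c_j(\mathds{1}_{[0,X_j]}-X_j)$. Nothing in your proposal supplies an independent proof of that one-dimensional central limit theorem, and it cannot be extracted afterwards from the Donsker-type statement you aim at: the logical direction here is that the CLT (the lemma) is the input from which weak convergence of $G_n$ to the Brownian bridge is deduced, not the other way around.

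For the record, the paper does not prove Lemma~\ref{lemma1} at all; it is quoted verbatim from Fukuyama \cite[Theorem~1]{fuk2} (the central limit theorem for Riesz--Raikov sums), whose proof rests on a martingale approximation and a delicate analysis of the Diophantine properties of the powers $a^{x}$ for real $a>1$ --- machinery of an entirely different nature from the finite-dimensional-distributions-plus-tightness scheme you describe. Your text is, in substance, a faithful reconstruction of the paper's proof of Theorem~\ref{th1} (Cram\'er--Wold together with Lemma~\ref{lemma1} for the marginals, Philipp-style exponential inequalities and dyadic chaining for tightness, then the continuous mapping theorem applied to the sup-norm), so had the target been Theorem~\ref{th1} the verdict would be ``correct, same approach as the paper''. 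As a proof of Lemma~\ref{lemma1} it establishes nothing.
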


\begin{proof}[Proof of Theorem~\ref{th1}]
Let a number $a$ satisfying the assumptions of the theorem be given. As noted above, we may assume without loss of generality that $N=1$. As in~\eqref{gdef}, we define the empirical process $G_n$ by
$$
G_n(t) = \frac{\sum_{x=1}^n \mathds{1}_{[0,t]} (\{a^x A\}) - nt}{\sqrt{n}}, \qquad t \in [0,1].
$$
For each $n$, the paths of the process $G_n$ are c\`{a}dl\`{a}g-functions, and consequently $G_n$ is a stochastic process on the Skorokhod space $D[0,1]$. We want to show that $(G_n)_{n \geq 1}$ converges weakly to a standard Brownian bridge process $B(t)$, which is a Gaussian process having (almost surely) continuous paths, mean zero and covariance function $\textup{Cov}(B(t_1),B(t_2))=\E(B(t_1)B(t_2)) = t_1(1-t_2)$ for $t_1<t_2$ (see Section~\ref{sect2}).\\

To prove weak convergence $G_n \Rightarrow B$, by~\cite[Theorem 13.1]{bill} we have to show that all finite-dimensional distributions of $G_n$ converge to the corresponding finite-dimensional distributions of $B$, and that the sequence of processes $G_n(t), n =1,2,\dots$ is tight. By the well-known Cram\'er--Wold device (see for example~\cite[p.~343]{ash}), for the convergence of all finite-dimensional distributions of $G_n$ to those of $B$ it is sufficient to show that
\begin{equation} \label{distribc}
b_1 G_n(t_1) + \dots + b_m G_n(t_m) \xrightarrow{D} b_1 B(t_1) + \dots + b_m B(t_m)
\end{equation}
for all $m \geq 1$ and all $(b_1,\dots,b_m) \in \R^m,~0 \leq t_1 < \dots < t_m \leq 1$. Here ``$\xrightarrow{D}$'' denotes convergence in distribution. Thus, let $(b_1,\dots,b_m) \in \R^m$ and $0 \leq t_1 < \dots < t_m \leq 1$ be given. For $t \in [0,1]$, let $\mathbf{I}_{[0,t]}(y)$ denote the function $\mathds{1}_{[0,t]}(\{y\}) - t$; in other words, $\mathbf{I}_{[0,t]}$ is the indicator function of $[0,t]$, centered at expectation and extended with period 1. Then we have
$$
G_n(t) = \frac{\sum_{x=1}^n \mathbf{I}_{[0,t]} (a^x A)}{\sqrt{n}},
$$
and consequently
\begin{equation} \label{gsumme}
b_1 G_n(t_1) + \dots + b_m G_n(t_m) = \frac{1}{\sqrt{n}} \sum_{x=1}^n \sum_{k=1}^m b_k \mathbf{I}_{[0,t_k]} (a^x A).
\end{equation}
The function 
$$
\sum_{k=1}^m b_k \mathbf{I}_{[0,t_k]} (y)
$$
has integral zero (on $[0,1]$) and is periodic with period 1. Furthermore, some simple calculations show that we have
\begin{equation} \label{vari}
\int_0^1 \left(\sum_{k=1}^m b_k \mathbf{I}_{[0,t_k]}(y)\right)^2 dy = \sum_{k=1}^m b_k^2 t_k(1-t_k) + 2 \sum_{1 \leq k_1 < k_2 \leq m} b_{k_1} b_{k_2} t_{k_1} (1-t_{k_2}).
\end{equation}
Thus, by Lemma~\ref{lemma1}, the distribution of~\eqref{gsumme} converges to a normal distribution with mean zero and variance given by the right-hand side of~\eqref{vari}. On the other hand, using the covariance structure of the Brownian bridge, we can easily show that
$$
\E \left( b_1 B(t_1) + \dots + b_m B(t_m) \right)^2 = \sum_{k=1}^m b_k^2 t_k(1-t_k) + 2 \sum_{1 \leq k_1 < k_2 \leq m} b_{k_1} b_{k_2} t_{k_1} (1-t_{k_2}).
$$
Thus the distribution of the expression on the right-hand side of~\eqref{distribc} is also the normal distribution with mean zero and variance given by the right-hand side of ~\eqref{vari}. In other words, we have established~\eqref{distribc}, which proves that the finite-dimensional distributions of $G_n$ converge to those of $B$.\\

To prove that the sequence $G_n(t), ~n =1,2,\dots$ of processes is tight, we have to establish the two conditions required in~\cite[Theorem 13.2]{bill}. Both can be easily shown using the exponential inequalities and the dyadic chaining method of~\cite{phil} (which are stated there for the case of lacunary sequences of integers, but, as noted in the proof of Lemma (3.4) of~\cite{berk}, remain valid in the real case). It is well known that the functional $f \mapsto \sup_{0 \leq t \leq 1} |f(t)|$ is a continuous functional on $D[0,1]$. Thus by the continuous mapping theorem (see for example~\cite[Theorem 1.3.6]{vdv}), and since we have already established $G_n \Rightarrow B$, the distribution of $\sup_{0 \leq t \leq 1} |G_n(t)|$ converges to the distribution of $\sup_{0 \leq t \leq 1} |B(t)|$. However, since $\lambda_n = \sup_{0 \leq t \leq 1} |G_n(t)|$, and since the distribution of the maximum of the standard Brownian bridge is the Kolmogorov distribution, this proves the theorem.
\end{proof}

\section*{Acknowledgments}

I want to thank Katusi Fukuyama for his support during my one-year stay at Kobe University, and for his remarks concerning this manuscript. Many thanks also to the administrative staff at the Department of Mathematics of Kobe University.


\def\cprime{$'$} \def\cprime{$'$} \def\cprime{$'$}

\end{document}